\newtheorem{theorem}{Theorem}[section]
\newtheorem{lemma}[theorem]{Lemma}
\newtheorem{remark}[theorem]{Remark}
\newenvironment{definition}[1][Definition]{\begin{trivlist}
\item[\hskip \labelsep {\bfseries #1}]}{\end{trivlist}}
\newenvironment{question}[1][Question]{\begin{trivlist}
\item[\hskip \labelsep {\bfseries #1}]}{\end{trivlist}}
\begin{document}

\title{Union-intersecting set systems}
\author{Gyula O.H. Katona\footnote{Alfr\'ed R\'enyi Institute of Mathematics}~ and
D\'aniel T. Nagy\footnote{E\"{o}tv\"{o}s Lor\'and University, Budapest}}
\maketitle

\begin{abstract}
Three intersection theorems are proved. First, we determine the size of the largest set system, where the system of the pairwise unions is $l$-intersecting. Then we investigate set systems where the union of any $s$ sets intersect the union of any $t$ sets. The maximal size of such a set system is determined exactly if $s+t\leq 4$, and asymptotically if $s+t\geq 5$. Finally, we exactly determine the maximal size of a $k$-uniform set system that has the above described $(s,t)$-union-intersecting property, for large enough $n$.

\begin{center}
{\bf Keywords}\\ Extremal set systems, Intersecting family, Erd\H{o}s-Ko-Rado theorem, $\Delta$-system, Forbidden subposets
\end{center}
\end{abstract}

\section{Introduction}~

In the present paper we prove three intersection theorems, inspired by the following question.

\begin{question} (J. K\"{o}rner, \cite{korner})
Let $\mathcal{F}$ be a set system whose elements are subsets of $[n]=\{1,2,\dots n\}$. Assume that there are no four different sets $F_1, F_2, G_1, G_2\in \mathcal{F}$ such that $(F_1\cup F_2)\cap (G_1\cup G_2)=\emptyset$. What is the maximal possible size of $\mathcal{F}~$?
\end{question}

We will solve this problem as a special case of Theorem \ref{thma}. (See Remark \ref{kornerremark}.)

The paper is organized as follows. In the rest of this section we review some theorems that will be used later. In Section 2, the size of the largest set system will be determined, where the system of the pairwise unions is $l$-intersecting. In Section 3, we investigate set systems where the union of any $s$ sets intersect the union of any $t$ sets. The maximal size of such a set system is determined exactly if $s+t\leq 4$, and asymptotically if $s+t\geq 5$. In Section 4, we exactly determine the maximal size of a $k$-uniform set system that has the above described $(s,t)$-union-intersecting property, for large enough $n$.

The following intersection theorems will be used in the proof of Theorem \ref{thma}.

\begin{definition}
Let ${[n] \choose k}$ denote the set of all $k$-element subsets of $[n]$. A set system $\mathcal{F}$ is called $l$-intersecting, if $|A\cap B|\geq l$ holds for all $A,B\in\mathcal{F}$ ($l>0$). The following set systems (containing $k$-element subsets of $[n]$) are obviously $l$-intersecting systems.
\begin{equation}
\mathcal{F}_i=\left\{F\in {[n]\choose k} ~\Big|~ |F\cap [l+2i]|\geq l+i \right\}~~~~~~~0\leq i\leq \frac{n-l}{2}
\end{equation}
For $1\leq l\leq k\leq n$ let
\begin{equation} AK(n,k,l)=\max_{0\leq i\leq \frac{n-l}{2}} |\mathcal{F}_i|.\end{equation}
\end{definition}

It was conjectured by Frankl \cite{FR} that this is the maximum size of a $k$-uniform $l$-intersecting family. (See also \cite{FRFR}.)

\begin{theorem} {\rm (Ahlswede-Khachatrian, \cite{ak})} \label{akthm}
Let $\mathcal{F}$ be a $k$-uniform $l$-intersecting set system whose elements are subsets of $[n]$. ($1\leq l\leq k\leq n$) Then
\begin{equation}
|\mathcal{F}|\leq AK(n,k,l).
\end{equation}
\end{theorem}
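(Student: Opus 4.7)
The plan is to follow the shifting-plus-pushing-pulling framework pioneered by Ahlswede and Khachatrian. The first step is the standard reduction: for each pair $1\le i<j\le n$, the shift $S_{ij}$ replaces every $F\in\mathcal{F}$ that contains $j$ but not $i$ (and for which $(F\setminus\{j\})\cup\{i\}\notin\mathcal{F}$) by $(F\setminus\{j\})\cup\{i\}$. A routine check shows that $S_{ij}\mathcal{F}$ has the same cardinality, is still $k$-uniform, and remains $l$-intersecting, since for any two sets $A,B$ in the shifted family their intersection is at least as large as the intersection of suitable preimages. Iterating over all pairs, we may therefore assume $\mathcal{F}$ is left-compressed.

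Next I would invoke the notion of a generating kernel. For a shifted $l$-intersecting family one proves that there exists a smallest integer $r\ge 0$ such that every $F\in\mathcal{F}$ satisfies $|F\cap[l+2r]|\ge l+r$; the existence of such an $r$ follows by feeding pairs of sets with minimal intersection through the shiftedness and $l$-intersection hypotheses. If $\mathcal{F}\subseteq\mathcal{F}_r$ for this value of $r$, then $|\mathcal{F}|\le|\mathcal{F}_r|\le AK(n,k,l)$ and we are done. Otherwise, some $F\in\mathcal{F}$ meets $[l+2r]$ in exactly $l+r$ elements while violating the analogous bound on $[l+2r+2]$, and this obstruction is the trigger for a pushing-pulling perturbation. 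Either one \emph{pushes}: remove the offending sets and replace them by the sets obtained by transporting elements toward the kernel. Or one \emph{pulls}: enlarge (resp.\ shrink) the kernel by two and perform the dual exchange. The combinatorial heart is to verify, via Kruskal-Katona-type shadow estimates, that in at least one of the two directions $|\mathcal{F}|$ does not decrease.

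Iterating this perturbation, equipped with a monovariant counting the number of sets that fail to lie in the current extremal model, terminates in a family of the form $\mathcal{F}_i$, at which point $|\mathcal{F}|\le AK(n,k,l)$ is immediate from the definition. The main obstacle is the size-preserving comparison in the pushing-pulling step: the gain-versus-loss count reduces to an inequality between binomial coefficients such as $\binom{n-l-2r}{k-l-r}$ and their neighbors, and the inequality only has the right sign on one side of a threshold in the parameter $r$ (relative to $n,k,l$). Deciding which of pushing or pulling to apply on each side of the threshold, and arguing that the monovariant strictly decreases in both regimes so that the process actually terminates at a single $\mathcal{F}_i$, is the delicate core of the Ahlswede-Khachatrian argument; everything else is bookkeeping around this dichotomy.
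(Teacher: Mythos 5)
You should first be aware that the paper contains no proof of this statement: Theorem \ref{akthm} is the Complete Intersection Theorem of Ahlswede and Khachatrian, quoted from \cite{ak} and used purely as a black box (namely in the proof of Theorem \ref{thma}, to bound $|\mathcal{F}^{\frac{n+l-3}{2}}|$ via (\ref{ahls})). So there is nothing in the paper to compare your argument against; the only question is whether your proposal stands on its own as a proof of the Ahlswede--Khachatrian theorem, and it does not.

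There are two gaps, one local and one global. Locally, the reduction you describe after shifting is not available in the form you state it: the relevant lemma of Frankl for a left-compressed $l$-intersecting family produces, for each individual $F\in\mathcal{F}$, some $i=i(F)$ with $|F\cap[l+2i]|\ge l+i$; it does not produce a single $r$ valid for every member simultaneously, and since $|F\cap[l+2i]|\ge l+i$ does not imply $|F\cap[l+2r]|\ge l+r$ for $r>i$, you cannot obtain such an $r$ by taking a maximum. The nonexistence of this uniform kernel parameter is exactly why the theorem resisted proof for twenty years, so the step ``if $\mathcal{F}\subseteq\mathcal{F}_r$ we are done, otherwise perturb'' starts from a dichotomy that has not been established. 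Globally, you explicitly defer the entire substance of the argument --- the generating-set formalism, the gain-versus-loss comparison of binomial coefficients in the pushing--pulling exchange, the choice of direction on each side of the threshold in $r$, and the termination of the iteration --- labelling it ``the delicate core'' without carrying any of it out. What remains after these omissions is only the standard (and correctly argued) reduction to left-compressed families, which is the easy part; as written, the proposal is a roadmap of the original proof rather than a proof.
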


\begin{theorem} {\rm (Katona, \cite{kat64}, formula (12))} \label{kat64thm}
Let $\mathcal{F}$ be a $t$-intersecting system of subsets of $[n]$. Then
\begin{equation}
|\mathcal{F}^i|+|\mathcal{F}^{n+t-1-i}|\leq {n \choose n+t-1-i}. ~~~~~~~\left(0\le i < \frac{n+t-1}{2}\right)
\end{equation}
\end{theorem}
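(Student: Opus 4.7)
Set $k:=n+t-1-i$. For any $A\in\mathcal{F}^i$ and $B\in\mathcal{F}^k$ one has $|A|+|B|=n+t-1$, so the $t$-intersection condition $|A\cap B|\geq t$ is equivalent to $|A\cup B|\leq n-1$, i.e.\ to $A\cup B\neq[n]$, equivalently $A^c\not\subseteq B$. Introduce
\[
\mathcal{G}=\Bigl\{B\in\binom{[n]}{k}:A^c\subseteq B\text{ for some }A\in\mathcal{F}^i\Bigr\};
\]
by the above, $\mathcal{G}\cap\mathcal{F}^k=\emptyset$, so $|\mathcal{G}|+|\mathcal{F}^k|\leq\binom{n}{k}$. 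Moreover, $B\mapsto[n]\setminus B$ identifies $\mathcal{G}$ with the collection of $(i-t+1)$-subsets of $[n]$ contained in some $A\in\mathcal{F}^i$, hence $|\mathcal{G}|=|\partial^{(t-1)}\mathcal{F}^i|$, the size of the iterated lower shadow of $\mathcal{F}^i$. The theorem therefore reduces to the shadow inequality
\[
|\partial^{(t-1)}\mathcal{F}^i|\geq|\mathcal{F}^i|. \qquad(\star)
\]

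To prove $(\star)$ I would exhibit a matching in the bipartite containment graph on $\mathcal{F}^i$ and $\binom{[n]}{i-t+1}$ (each $A\in\mathcal{F}^i$ has $\binom{i}{t-1}$ neighbours), giving an injection $A\mapsto S(A)$ with $S(A)\subseteq A$. By Hall's marriage theorem such a matching exists iff $|\partial^{(t-1)}\mathcal{A}|\geq|\mathcal{A}|$ holds for every subfamily $\mathcal{A}\subseteq\mathcal{F}^i$; since any subfamily of a $t$-intersecting family is again $t$-intersecting, Hall's condition is equivalent to $(\star)$ for all $t$-intersecting families in $\binom{[n]}{i}$ with $i<(n+t-1)/2$.

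The main obstacle is $(\star)$ itself. The normalized LYM/Kruskal--Katona bound $|\partial^{(t-1)}\mathcal{F}^i|\binom{n}{i}\geq|\mathcal{F}^i|\binom{n}{i-t+1}$ is strictly weaker than $(\star)$ in this regime, because the hypothesis $i<(n+t-1)/2$ forces $\binom{n}{i}\geq\binom{n}{i-t+1}$ (the level $i$ lies at least as close to $n/2$ as $i-t+1$ does). The $t$-intersection hypothesis must therefore be used substantively. My approach is Frankl's shifting: shifting preserves $|\mathcal{F}^i|$ and the $t$-intersection property and does not enlarge the lower shadow, so it is enough to verify $(\star)$ on a shifted $t$-intersecting family. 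On such a family the elements concentrate on small labels, and one can use the Ahlswede--Khachatrian-style structural description ($|A\cap[t+2r]|\geq t+r$ for some $r$) to compute that an $(i-t+1)$-set $S$ lies in $\partial^{(t-1)}\mathcal{F}_r$ precisely when $|S\cap[t+2r]|\geq r+1$; a termwise binomial comparison of hockey-stick type then gives $(\star)$ throughout the prescribed range of $i$.
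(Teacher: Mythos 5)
The paper itself offers no proof of this statement --- it is quoted from Katona's 1964 paper, where formula (12) is deduced from his intersecting shadow theorem in essentially the way you set up. Your reduction is correct and clean: with $k=n+t-1-i$ the cross-condition between $\mathcal{F}^i$ and $\mathcal{F}^k$ is exactly $A^c\not\subseteq B$, so $\mathcal{F}^k$ is disjoint from the family $\mathcal{G}$ of $k$-sets containing some complement, $|\mathcal{G}|=\bigl|\partial^{(t-1)}\mathcal{F}^i\bigr|$, and everything reduces to the shadow inequality $(\star)$. You are also right that the normalized Kruskal--Katona/LYM bound is too weak in the range $i<(n+t-1)/2$ and that the $t$-intersecting hypothesis on $\mathcal{F}^i$ must do real work. (For the counting argument you only need $(\star)$ for $\mathcal{F}^i$ itself; the Hall's-theorem detour through all subfamilies is superfluous.)

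The gap is in your proof of $(\star)$, which is the entire content of the cited result (it is Katona's intersecting shadow theorem in the regime where the ratio $\binom{2i-t}{i-t+1}/\binom{2i-t}{i}\ge 1$). The structural fact you invoke for a shifted $t$-intersecting family --- that each member $A$ satisfies $|A\cap[t+2r]|\ge t+r$ for some $r$ --- is Frankl's lemma, but the witness $r=r(A)$ depends on $A$: it places the family inside the \emph{union} $\bigcup_r\mathcal{F}_r$, not inside a single $\mathcal{F}_r$. Your hockey-stick comparison establishes $\bigl|\partial^{(t-1)}\mathcal{F}_r\bigr|\ge|\mathcal{F}_r|$ only for the full extremal families $\mathcal{F}_r$. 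That does not imply $(\star)$ for an arbitrary shifted $t$-intersecting family $\mathcal{A}$: a family scattered across several $\mathcal{F}_r$'s (or a proper subfamily of one $\mathcal{F}_r$) can a priori have a shadow much smaller than that of any $\mathcal{F}_r$, while $|\mathcal{A}|$ is not controlled by any single $|\mathcal{F}_r|$. Closing this requires an argument valid for every such family --- e.g.\ Frankl's shifting proof, which builds an explicit injection of $\mathcal{A}$ into $\partial^{(t-1)}\mathcal{A}$ using the minimal witness $r(A)$ of each set and checks injectivity across different values of $r$, or Katona's original double induction. As written, the key inequality is asserted rather than proved.
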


The following results are about set systems not containing certain subposets. We will use them to prove Theorem \ref{thmb}.

\begin{definition}
Let $P$ be a finite poset with the relation $\prec$, and $\mathcal{F}$ be a family of subsets of $[n]$. We say that $P$ is contained in $\mathcal{F}$ if there is an injective mapping $f:P\rightarrow \mathcal{F}$ satisfying $a\prec b \Rightarrow f(a)\subset f(b)$ for all $a,b\in P$. $\mathcal{F}$ is called $P$-free if $P$ is not contained in it.
\end{definition}

\begin{definition}
Let $K_{xy}$ denote the poset with elements $\{a_1, a_2, \dots a_x, b_1, \dots b_y\}$, where $a_i < b_j$ for all $(i,j)$ and there is no other relation.
\end{definition}

\begin{theorem}{\rm (Katona-Tarj\'an, \cite{tarjan})} \label{tarjanthm}
Assume that $\mathcal{G}$ is a family of subsets of $[n]$ that is $K_{12}$-free and $K_{21}$-free. Then
\begin{equation}
|\mathcal{G}|\leq 2{n-1 \choose \lfloor \frac{n-1}{2} \rfloor}.
\end{equation}
\end{theorem}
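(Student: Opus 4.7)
My plan is to prove the bound via the chain method, double-counting incidences between maximal chains of $2^{[n]}$ and members of $\mathcal{G}$. The structural input from the hypothesis is that $K_{12}$-freeness forces each $A\in\mathcal{G}$ to have at most one strict superset in $\mathcal{G}$, and $K_{21}$-freeness analogously forces at most one strict subset. Hence $\mathcal{G}$ decomposes into \emph{singletons} $\mathcal{G}_0$ (sets incomparable to all other members of $\mathcal{G}$) and \emph{matched pairs} $(A_i,B_i)$ for $i=1,\ldots,m$ with $A_i\subsetneq B_i$, each partner being comparable to no element of $\mathcal{G}$ apart from the other. In particular every maximal chain of $2^{[n]}$ meets $\mathcal{G}$ in at most two sets (three would give a $K_{12}$), and if it meets $\mathcal{G}$ in two sets they form one of the matched pairs.

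Double counting the incidences $(c,A)$ with $c$ a maximal chain and $A\in\mathcal{G}\cap c$ then yields $\sum_{A\in\mathcal{G}}|A|!(n-|A|)!\le n!+\sum_{i=1}^m |A_i|!(|B_i|-|A_i|)!(n-|B_i|)!$, since every chain meeting $\mathcal{G}$ twice passes through a unique matched pair. Dividing by $n!$ produces the weighted inequality
\[
\sum_{S\in\mathcal{G}_0}\frac{1}{\binom{n}{|S|}}+\sum_{i=1}^m w(|A_i|,|B_i|)\;\le\;1,\qquad w(a,b):=\frac{1}{\binom{n}{a}}+\frac{1}{\binom{n}{b}}-\frac{a!\,(b-a)!\,(n-b)!}{n!},
\]
where $w(a,b)$ has the probabilistic meaning of the chance that a uniform random maximal chain of $2^{[n]}$ passes through $A$ or $B$, for a fixed comparable pair $A\subsetneq B$ of sizes $a,b$.

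The main obstacle is the weight inequality $w(a,b)\ge 1/\binom{n-1}{\lfloor(n-1)/2\rfloor}$ for all $0\le a<b\le n$, with equality at Hasse covers at the middle level. For $b=a+1$ a short telescoping gives $w(a,a+1)=1/\binom{n-1}{a}\ge 1/\binom{n-1}{\lfloor(n-1)/2\rfloor}$; the case $b\ge a+2$ has to be verified by direct algebraic manipulation, equivalently by showing that among all comparable pairs in $2^{[n]}$ the chain-hitting probability is minimized at a middle Hasse cover. Granting this together with the elementary Pascal bound $\binom{n}{|S|}\le\binom{n}{\lfloor n/2\rfloor}\le 2\binom{n-1}{\lfloor(n-1)/2\rfloor}$, each singleton contributes at least $1/(2\binom{n-1}{\lfloor(n-1)/2\rfloor})$ and each pair at least $1/\binom{n-1}{\lfloor(n-1)/2\rfloor}$ to the left-hand side; multiplying through by $2\binom{n-1}{\lfloor(n-1)/2\rfloor}$ then gives $|\mathcal{G}|=|\mathcal{G}_0|+2m\le 2\binom{n-1}{\lfloor(n-1)/2\rfloor}$, as claimed.
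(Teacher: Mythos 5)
The paper does not prove this statement; it is imported verbatim from Katona--Tarj\'an \cite{tarjan}, so there is no internal proof to compare yours against and I am judging the proposal on its own. Your framework is sound: $K_{12}$- and $K_{21}$-freeness do force the comparability graph of $\mathcal{G}$ to be a matching, every maximal chain then meets $\mathcal{G}$ at most twice and meets it twice only along a matched pair, and the resulting LYM-type inequality
\[
\sum_{S\in\mathcal{G}_0}\frac{1}{\binom{n}{|S|}}+\sum_{i=1}^m w(|A_i|,|B_i|)\le 1
\]
is correctly derived, as is the identity $w(a,a+1)=1/\binom{n-1}{a}$ and the final bookkeeping that converts the weight bounds into $|\mathcal{G}_0|+2m\le 2\binom{n-1}{\lfloor (n-1)/2\rfloor}$.

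However, there is a genuine gap: the inequality $w(a,b)\ge 1/\binom{n-1}{\lfloor (n-1)/2\rfloor}$ for $b\ge a+2$ is the technical heart of the argument, and you explicitly defer it (``has to be verified \dots Granting this''). This is not a step one can wave at. Writing $w(a,b)=\frac{1}{\binom{n}{a}}\bigl(1-\frac{1}{\binom{n-a}{b-a}}\bigr)+\frac{1}{\binom{n}{b}}$, the obvious crude estimates fail in exactly the critical regime: when $a$ and $b$ are both within $O(1)$ of $n/2$, each of $1/\binom{n}{a}$ and $1/\binom{n}{b}$ is only about \emph{half} of the target $1/\binom{n-1}{\lfloor(n-1)/2\rfloor}\approx 2/\binom{n}{\lfloor n/2\rfloor}$, so one must keep both reciprocals essentially in full \emph{and} show the joint-hit correction term $1/\bigl(\binom{n}{a}\binom{n-a}{b-a}\bigr)$ is dominated by the small surplus coming from $\binom{n}{a},\binom{n}{b}$ being off-centre. (For instance, for $n=2m$, $a=m$, $b=m+2$ the required inequality reduces, after simplification, to $(m+1)(m+2)\ge m(m-1)+2$ --- true, but only after an exact computation.) The statement is in fact correct for all $0\le a<b\le n$, so your approach does go through, but as written the proof is incomplete precisely at its pivotal lemma; a full argument needs either a monotonicity claim reducing general pairs to Hasse covers or a direct case analysis of the displayed expression.
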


\begin{theorem}{\rm (De Bonis-Katona, \cite{rfork})} \label{rfork}
Assume that $\mathcal{G}$ is a $K_{1y}$-free family of subsets of $[n]$. Then
\begin{equation}
|\mathcal{G}|\leq {n\choose \lfloor n/2 \rfloor} \left(1+\frac{2(y-1)}{n}+O(n^{-2})\right).
\end{equation}
\end{theorem}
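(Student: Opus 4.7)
The plan is a refined Lubell chain-counting argument. Write $S=\sum_{F\in\mathcal{G}}\binom{n}{|F|}^{-1}$; since $\binom{n}{|F|}\leq\binom{n}{\lfloor n/2\rfloor}$, we have $|\mathcal{G}|\leq\binom{n}{\lfloor n/2\rfloor}\cdot S$, so it suffices to prove $S\leq 1+\tfrac{2(y-1)}{n}+O(n^{-2})$. The standard double count of chain-set incidences gives $S=\mathbb{E}|\mathcal{G}\cap C|$ for a uniformly random maximal chain $C$ in $2^{[n]}$.

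On each chain $C$ with $\mathcal{G}\cap C\neq\emptyset$, I isolate its $\subseteq$-maximum $T(C)\in\mathcal{G}$ and count the strictly smaller members of $\mathcal{G}\cap C$ separately. The top contributes at most $1$ in expectation, so $S\leq 1+E$, where $E=\mathbb{E}|\{F\in\mathcal{G}\cap C:F\subsetneq T(C)\}|$. Each such $F$ has $T(C)$ as a strict superset in $\mathcal{G}$ on the chain, and $K_{1y}$-freeness forces $|U(F)|\leq y-1$ for $U(F):=\{H\in\mathcal{G}:F\subsetneq H\}$. A union bound over $G\in U(F)$ together with $\Pr[F,G\in C]=1/\bigl[\binom{n}{|F|}\binom{n-|F|}{|G|-|F|}\bigr]$ and $\binom{n-|F|}{|G|-|F|}\geq n-|F|$ (valid for $F\subsetneq G\subsetneq[n]$) gives
\[
E\leq(y-1)\sum_{F\in\mathcal{G}}\frac{1}{\binom{n}{|F|}(n-|F|)}=\frac{y-1}{n}\sum_{F\in\mathcal{G}}\frac{1}{\binom{n-1}{|F|}},
\]
where the last equality uses $\binom{n}{|F|}(n-|F|)=n\binom{n-1}{|F|}$; the marginal case $G=[n]$ is handled separately.

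For $|F|\leq n/2$, $\binom{n-1}{|F|}\geq\tfrac12\binom{n}{|F|}$, so this portion of the sum is at most $\tfrac{2(y-1)}{n}S$. Substituting into $S\leq 1+E$ and rearranging produces $S\leq 1+\tfrac{2(y-1)}{n}+O(n^{-2})$, \emph{provided} the tail $|F|>n/2$ contributes only $O(n^{-2})$ to $E$. This tail analysis is the main obstacle: the inequality $\binom{n-1}{|F|}\geq\binom{n}{|F|}/2$ fails when $|F|>n/2$, and a naive estimate gives $O(1)$. The fix uses two observations: (i) a set $F$ of size $n-j$ has at most $\min(y-1,\,2^j-1)$ proper supersets in $2^{[n]}$, so $|U(F)|$ is actually far smaller than $y-1$ when $|F|$ is close to $n$; and (ii) if $\mathcal{G}$ does contain many sets of size close to $n$, then every smaller $F'\in\mathcal{G}$ acquires many proper supersets, forcing $|F'|\geq n-y+1$ and hence $|\mathcal{G}|=O(n^{y-1})$, already well inside the target bound. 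A layer-by-layer accounting then pins down the tail as $O(n^{-2})$. The degenerate cases $\emptyset\in\mathcal{G}$ (which forces $|\mathcal{G}|\leq y$) and $[n]\in\mathcal{G}$ need only minor modifications.
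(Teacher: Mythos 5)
First, note that the paper does not prove this statement at all: it is quoted from De Bonis--Katona \cite{rfork}, so there is no internal proof to compare against; I am judging your argument on its own terms against the original result.

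Your skeleton is the right kind of first attempt, and the main term comes out correctly for the middle layers: the Lubell identity $S=\sum_{F\in\mathcal{G}}\binom{n}{|F|}^{-1}=\mathbb{E}|\mathcal{G}\cap C|$, the bound $S\le 1+E$ by splitting off the top member of $\mathcal{G}\cap C$, the union bound over the at most $y-1$ proper supersets of each $F$, and the computation $\binom{n}{|F|}(n-|F|)=n\binom{n-1}{|F|}$ are all correct and do yield the coefficient $\tfrac{2(y-1)}{n}$ from the sets with $|F|\le n/2$. The problem is that your very first reduction --- ``it suffices to prove $S\le 1+\tfrac{2(y-1)}{n}+O(n^{-2})$'' --- is a reduction to a \emph{false} statement, so the ``tail'' is not a technicality that a more careful accounting can remove. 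Take $\mathcal{G}=\binom{[n]}{n-1}\cup\binom{[n]}{n-2}$: every $(n-2)$-set has exactly two proper supersets in $\mathcal{G}$ and every $(n-1)$-set has none, so $\mathcal{G}$ is $K_{13}$-free, yet $S=\frac{n}{\binom{n}{n-1}}+\frac{\binom{n}{2}}{\binom{n}{n-2}}=2$. Hence for $y=3$ the quantity you are trying to bound by $1+\tfrac{4}{n}+O(n^{-2})$ actually equals $2$, and the tail contribution to $E$ is genuinely $\Theta(1)$. The crude step $|\mathcal{G}|\le\binom{n}{\lfloor n/2\rfloor}S$ throws away exactly the information (that sets far above the middle are individually cheap) needed to survive such families, so no bound on $S$ alone can give the theorem.

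Your proposed rescue does not close this. Claim (i) is fine, but claim (ii) --- that many sets of size close to $n$ force every other member to have size at least $n-y+1$, whence $|\mathcal{G}|=O(n^{y-1})$ --- is false as stated: for instance $n/2$ sets of size $n-2$ whose $2$-element complements are pairwise disjoint can coexist with (exponentially many) sets of size $n/2$ that meet every one of those complements and therefore lie below none of them, so nothing forces the smaller sets up to level $n-y+1$. What is actually needed is a quantitative trade-off: whenever the upper layers carry enough ``fork mass'' to make the tail of $E$ non-negligible, the $K_{1y}$-freeness must be shown to delete a compensating portion of the middle layers from $\mathcal{G}$. That trade-off is the real content of the De Bonis--Katona theorem, and it is entirely absent from the sketch; the original proof runs a more delicate double count that never passes through the false intermediate inequality on $S$. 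As written, the proposal establishes the theorem only for families contained in levels $\le n/2$ (plus minor boundary cases), and leaves the essential difficulty open.
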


\begin{theorem}{\rm (De Bonis-Katona, \cite{rfork})} \label{kxy}
Assume that $\mathcal{G}$ is a $K_{xy}$-free family of subsets of $[n]$. Then
\begin{equation}
|\mathcal{G}|\leq {n\choose \lfloor n/2 \rfloor} \left(2+\frac{2(x+y-3)}{n}+O(n^{-2})\right).
\end{equation}
\end{theorem}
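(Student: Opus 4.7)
My plan is to establish the bound via the Lubell-style chain-counting method. The starting identity
$$\sum_{\pi}|\pi\cap\mathcal{G}|=n!\sum_{G\in\mathcal{G}}\binom{n}{|G|}^{-1},$$
summed over all maximal chains $\pi$ in $2^{[n]}$, combined with $\binom{n}{|G|}\leq\binom{n}{\lfloor n/2\rfloor}$, reduces the theorem to proving the Lubell-type inequality
$$L(\mathcal{G}):=\sum_{G\in\mathcal{G}}\binom{n}{|G|}^{-1}\leq 2+\frac{2(x+y-3)}{n}+O(n^{-2}).$$

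I would cover $\mathcal{G}$ as $\mathcal{G}_{\mathrm{low}}\cup\mathcal{G}_{\mathrm{high}}$, where $\mathcal{G}_{\mathrm{low}}$ contains those $G\in\mathcal{G}$ having fewer than $y$ proper supersets in $\mathcal{G}$, and $\mathcal{G}_{\mathrm{high}}$ those having fewer than $x$ proper subsets. This is indeed a covering: any $G\in\mathcal{G}$ outside both parts would possess $\geq x$ proper subsets and $\geq y$ proper supersets in $\mathcal{G}$, and these together with $G$ itself (discarded) would form a $K_{xy}$ configuration, contradicting the hypothesis. By construction $\mathcal{G}_{\mathrm{low}}$ is $K_{1y}$-free and $\mathcal{G}_{\mathrm{high}}$ is $K_{x1}$-free; applying Theorem \ref{rfork} directly to $\mathcal{G}_{\mathrm{low}}$, and via complementation to $\mathcal{G}_{\mathrm{high}}$, yields $L(\mathcal{G}_{\mathrm{low}})\leq 1+\frac{2(y-1)}{n}+O(n^{-2})$ and $L(\mathcal{G}_{\mathrm{high}})\leq 1+\frac{2(x-1)}{n}+O(n^{-2})$.

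The main obstacle is that the naive union bound yields $L(\mathcal{G})\leq 2+\frac{2(x+y-2)}{n}+O(n^{-2})$, which is off from the claim by $2/n$. To close this gap, I would analyze the overlap $\mathcal{G}_{\mathrm{low}}\cap\mathcal{G}_{\mathrm{high}}$. Using the fact that the near-extremal $K_{1y}$-free families from \cite{rfork} concentrate on the middle level with only an $O(1/n)$ fraction elsewhere, I would argue that inside any $K_{xy}$-free family $\mathcal{G}$ for which both individual bounds are simultaneously near-saturated, the intersection $\mathcal{G}_{\mathrm{low}}\cap\mathcal{G}_{\mathrm{high}}$ must contribute at least $\frac{2}{n}\binom{n}{\lfloor n/2\rfloor}(1+o(1))$ to the LYM-sum, which upon subtraction recovers the claimed constant $2(x+y-3)$. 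Making this joint structural analysis precise---or alternatively replacing the union bound by a direct chain-counting argument in which each chain $\pi$ is bounded by $|\pi\cap\mathcal{G}|\leq 2+(\text{short overlaps})$ with the short overlaps limited by $K_{xy}$-freeness---is the principal technical step of the proof.
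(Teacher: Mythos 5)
First, a point of reference: the paper does not prove this statement at all --- Theorem \ref{kxy} is quoted from De Bonis and Katona \cite{rfork} as a black box --- so there is no internal proof to compare yours against, and your attempt has to stand on its own. Much of it does. The covering $\mathcal{G}=\mathcal{G}_{\mathrm{low}}\cup\mathcal{G}_{\mathrm{high}}$ is correct (a set with $x$ proper subsets and $y$ proper supersets in $\mathcal{G}$ yields a $K_{xy}$ once the middle set is discarded), the two parts are indeed $K_{1y}$-free and $K_{x1}$-free, and complementation reduces the second part to Theorem \ref{rfork}. This honestly proves
\begin{equation*}
|\mathcal{G}|\leq {n\choose \lfloor n/2 \rfloor}\left(2+\frac{2(x+y-2)}{n}+O(n^{-2})\right),
\end{equation*}
and for that weaker bound you do not even need the Lubell-function machinery: $|\mathcal{G}|\le|\mathcal{G}_{\mathrm{low}}|+|\mathcal{G}_{\mathrm{high}}|$ suffices. (Note that Theorem \ref{rfork} as quoted bounds the cardinality, not the LYM sum, so the inequalities $L(\mathcal{G}_{\mathrm{low}})\le 1+\frac{2(y-1)}{n}+O(n^{-2})$ you invoke are themselves a strengthening you would have to extract from the source.)

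The genuine gap is exactly where you flag it: the step from $2(x+y-2)$ to $2(x+y-3)$. The proposed overlap analysis is a hope, not an argument. It requires (i) a stability version of the $K_{1y}$-theorem (near-extremal families concentrate on the middle level), which is not part of the quoted statement and would need its own proof, and (ii) a deduction that simultaneous near-saturation of both parts forces $L(\mathcal{G}_{\mathrm{low}}\cap\mathcal{G}_{\mathrm{high}})\geq \frac{2}{n}-O(n^{-2})$. Point (ii) is delicate: for the natural near-extremal example of two adjacent middle levels (which is $K_{xy}$-free for $x,y\ge 2$), one checks that $\mathcal{G}_{\mathrm{low}}$ and $\mathcal{G}_{\mathrm{high}}$ are exactly the upper and lower level and their intersection is \emph{empty}; the theorem survives only because each part is then under-saturated by roughly the amount you need to recover, so the bookkeeping between ``overlap'' and ``slack in each part'' has to be done globally, not via the union bound. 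Nothing in the proposal carries this out, and your fallback (a direct per-chain bound $|\pi\cap\mathcal{G}|\le 2+\cdots$, which is in the spirit of the actual De Bonis--Katona chain/interval counting) is likewise only named, not executed. As written, the proposal establishes the theorem with constant $2(x+y-2)$ in place of $2(x+y-3)$ --- enough for the leading term used in Theorem \ref{thmb}, but not the statement as claimed.
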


\section{Union-l-intersecting systems}

\begin{definition}
Let $\mathcal{F}$ be a set system and $k\in\mathbb{N}$. Then $\mathcal{F}^k$ denotes the set of the $k$-element sets in $\mathcal{F}$.
\end{definition}

\begin{definition}
A set system $\mathcal{F}$ is called union-$l$-intersecting, if it satisfies $|(F_1\cup F_2)\cap (G_1\cup G_2)|\geq l$ for all sets $F_1, F_2, G_1, G_2\in \mathcal{F}$, $F_1\not=F_2$, $G_1\not=G_2$.
\end{definition}

\begin{theorem}\label{thma} Let $\mathcal{F}$ be a union-$l$-intersecting set system whose elements are subsets of $[n]$. ($n\geq 3$) Then we have the the following upper bounds for $|\mathcal{F}|$.
\begin{enumerate}[a)]
\item If $n+l$ is even, then
\begin{equation}|\mathcal{F}|\leq \sum_{i=\frac{n+l}{2}-1}^{n} {n \choose i}.\end{equation}

\item If $n+l$ is odd, then
\begin{equation}
|\mathcal{F}|\leq AK\left(n, \frac{n+l-3}{2}, l\right)+\sum_{i=\frac{n+l-1}{2}}^{n} {n \choose i}.
\end{equation}
\end{enumerate}

These are the best possible bounds.
\end{theorem}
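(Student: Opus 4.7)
For part~(a), take $\mathcal{F}_a=\{F\subseteq [n]:|F|\ge(n+l)/2-1\}$. Two distinct sets in $\mathcal{F}_a$ have union of size at least $(n+l)/2$, so for any four distinct $F_1,F_2,G_1,G_2\in\mathcal{F}_a$ with $F_1\ne F_2$ and $G_1\ne G_2$, inclusion-exclusion gives
\[
|(F_1\cup F_2)\cap(G_1\cup G_2)|\ge \tfrac{n+l}{2}+\tfrac{n+l}{2}-n=l.
\]
For part~(b), take $\mathcal{F}_b=\{F:|F|\ge(n+l-1)/2\}\cup\mathcal{A}$, where $\mathcal{A}\subseteq\binom{[n]}{(n+l-3)/2}$ is an Ahlswede-Khachatrian family $\mathcal{F}_j$ attaining $AK(n,(n+l-3)/2,l)$. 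Verification splits into large-large, large-medium, and medium-medium cases; the medium-medium case uses that each member of $\mathcal{F}_j$ meets $[l+2j]$ in at least $l+j$ points, so two such unions share at least $l$ points of $[l+2j]$.

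\textbf{Upper bound.} Set $m=(n+l)/2-1$ in~(a) and $m=(n+l-1)/2$ in~(b). The plan is to bound $\mathcal{F}$ shell by shell, pairing $\mathcal{F}^i$ with $\mathcal{F}^{n+l-1-i}$ and applying Katona's Theorem~\ref{kat64thm}. The obstruction is that $\mathcal{F}$ is only union-$l$-intersecting; we address this via the identity
\[
(F\cup H)\cap(G\cup H)=H\cup(F\cap G),
\]
which yields $|H\cup(F\cap G)|\ge l$ for any three distinct $F,G,H\in\mathcal{F}$. A careful choice of the auxiliary set $H$, combined with the cardinality bound $|F|+|G|\ge n+l-1$ on paired shells, gives the Katona-style estimate
\[
|\mathcal{F}^i|+|\mathcal{F}^{n+l-1-i}|\le\binom{n}{n+l-1-i}
\]
for $i<m$ in~(a) and $i<m-1$ in~(b). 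Summing these with the trivial $|\mathcal{F}^m|\le\binom{n}{m}$ (and, in~(a), $|\mathcal{F}^{m+1}|\le\binom{n}{m+1}$) telescopes to $\sum_{i\ge m}\binom{n}{i}$. For~(b), the middle shell $\mathcal{F}^{m-1}$ is argued to be $l$-intersecting by applying the identity with a suitable large $H\in\mathcal{F}^{\ge m}$, so Theorem~\ref{akthm} gives $|\mathcal{F}^{m-1}|\le AK(n,m-1,l)$.

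\textbf{Main obstacle.} The crux is the shell-pair estimate under the weaker union-$l$-intersecting assumption. Inclusion-exclusion alone only gives $|F\cap G|\ge l-1$ for $F\in\mathcal{F}^i$, $G\in\mathcal{F}^{n+l-1-i}$, one short of the $l$-intersection that Theorem~\ref{kat64thm} requires. The auxiliary $H$ must supply the missing element; this is not automatic, and a careful case-split---distinguishing whether $\mathcal{F}$ contains very small sets that trigger the identity forcefully, versus configurations concentrated near the middle shell---is the delicate step of the argument.
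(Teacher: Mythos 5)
Your upper-bound argument has a fatal gap: the shell-pair inequality $|\mathcal{F}^i|+|\mathcal{F}^{n+l-1-i}|\le\binom{n}{n+l-1-i}$ that your whole summation rests on is false for union-$l$-intersecting families. Concretely, take $n=5$, $l=1$ (so $n+l$ is even, case (a), $m=2$) and
\[
\mathcal{F}=\{F\subseteq[5]:1\in F\}\cup\{\{2,3,4,5\}\}.
\]
Any two distinct members include at most one copy of $\{2,3,4,5\}$, so every pairwise union contains the element $1$; hence $\mathcal{F}$ is union-$1$-intersecting (and even an upset). Yet for $i=1<m$ you would need $|\mathcal{F}^1|+|\mathcal{F}^{4}|\le\binom{5}{4}=5$, while here $|\mathcal{F}^1|+|\mathcal{F}^4|=1+5=6$. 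The underlying reason is that union-$l$-intersecting is genuinely weaker than $l$-intersecting: passing from a pair of sets to a pair of one-element extensions can raise an intersection by $2$, so the most one can extract about pairwise intersections is an $(l-2)$-type condition, and the correct Katona-style pairing is of shells $i$ and $n+l-3-i$ (two levels lower than yours), not $i$ and $n+l-1-i$. This shift of two is exactly what produces the extra term $\binom{n}{m}$ (resp.\ the two middle shells) in the final bound; your pairing would, if true, prove a bound strictly below the extremal construction.

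The tool you propose to repair this, the identity $(F\cup H)\cap(G\cup H)=(F\cap G)\cup H$, cannot do the job even in principle: the resulting set contains $H$, so the hypothesis only yields $|(F\cap G)\cup H|\ge l$, which is vacuous as soon as $|H|\ge l$ and never forces two sets with small intersection out of the family. The quadruples that actually carry information are $(F,C,G,D)$ with $C\supseteq F$, $D\supseteq G$, $C\ne F$, $D\ne G$, for which $(F\cup C)\cap(G\cup D)=C\cap D$; making such $C,D$ available is precisely why one must first replace $\mathcal{F}$ by an upset, a reduction your write-up omits entirely. With the upset in hand one shows (for $l\ge3$) that $\mathcal{F}$ is $(l-2)$-intersecting and applies Theorem \ref{kat64thm} with $t=l-2$, or (for $l\le2$) runs a direct Hall-type matching between disjoint sets in shells $i$ and $n+l-3-i$; separately, the single shell $\frac{n+l-3}{2}$ is shown to be genuinely $l$-intersecting (again via supersets $C,D$) and bounded by Theorem \ref{akthm}. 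Your constructions for the lower bound are essentially correct, but the upper bound needs to be rebuilt along these lines.
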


\begin{proof}
We can assume that $\mathcal{F}$ is an {\it upset}, that is $A\in\mathcal{F}$, $A\subset B$ imply $B\in\mathcal{F}$. (If there are sets $A\subset B$, $A\in\mathcal{F}$, $B\not\in\mathcal{F}$ then we can replace $\mathcal{F}$ by $\mathcal{F}-A+B$. After finitely many steps we arrive at an upset of the same size that is still union-$l$-intersecting.)

First, assume that $l\in\{1,2\}$. Note that if $A,B\in\mathcal{F}$, $A\cap B=\emptyset$, and $|A\cup B|=n+l-3<n$, then both $A$ and $B$ can not be in $\mathcal{F}$ at the same time. $A,B \in\mathcal{F}$ and $\mathcal{F}$ being an upset would imply that there are two sets $C,D\in\mathcal{F}$ such that $A \subset C$, $B\subset D$ and $|(A\cup C) \cap (B \cup D)| = |C\cap D|= l-1$, with contradiction.

For all $0\le i < \frac{n+l-3}{2}$ define the bipartite graph $G_i(S_i, T_i, E_i)$ as follows. Let $S_i$ be the set of all the $i$-element subsets of $[n]$, let $T_i$ be the set of subsets of size $n+l-3-i$, and connect two sets $A\in S_i$ and $B\in T_i$ if they are disjoint. Then both vertex classes contain vertices of the same degree, so it follows by Hall's theorem that there is a matching that covers the smaller vertex class, that is $S_i$. Since at most one of two matched subsets can be in $\mathcal{F}$, it follows that
\begin{equation}\label{fplusf} |\mathcal{F}^i|+|\mathcal{F}^{n+l-3-i}|\leq {n \choose n+l-3-i} ~~~~~~~\left(0\le i < \frac{n+l-3}{2}\right).\end{equation}

Now let $l\geq 3$. We will show that (\ref{fplusf}) holds for all $n$.
Assume that there are $A,B\in\mathcal{F}$, $A,B\not=[n]$ such that $|A\cap B|\leq l-3$. Take $x\not\in A$ and $y\not\in B$. Then $A\cup\{x\},~B\cup\{y\}\in\mathcal{F}$, since $\mathcal{F}$ is an upset and
$$|(A\cup (A\cup\{x\}))\cap (B\cup (B\cup\{y\}))|
=|(A\cup\{x\})\cap(B\cup\{y\})|\leq |(A\cap B) \cup \{x,y\}|=l-3+2 < l.$$
So $\mathcal{F}-\{[n]\}$ is an $(l-2)$-intersecting system, so $\mathcal{F}$ is one too.
Now use Theorem \ref{kat64thm} with $t=l-2$. ($l-2$ is positive since $l\geq 3$.) It gives us that (\ref{fplusf}) holds for all $n$ and $l$.

Assume that $l$ is a positive integer $n+l$ is odd. Then the sets in $\mathcal{F}^{\frac{n+l-3}{2}}$ form an $l$-intersecting family. $A,B \in\mathcal{F}^{\frac{n+l-3}{2}}$, $|A\cap B|\le l-1$ and $\mathcal{F}$ being an upset would imply that there are two sets $C,D\in\mathcal{F}$ such that $A \subset C$, $B\subset D$ and $|(A\cup C) \cap (B \cup D)| = |C\cap D|= l-1$. So Theorem \ref{akthm} provides an upper bound:
\begin{equation}\label{ahls} |\mathcal{F}^{\frac{n+l-3}{2}}|\le AK\left(n, \frac{n+l-3}{2}, l\right).\end{equation}

The upper bounds of the theorem follow after some calculations. When $l\leq 2$, the inequalities of (\ref{fplusf}) imply
\begin{equation}
|\mathcal{F}|=\sum_{i=0}^n |\mathcal{F}^i|=|\mathcal{F}^{\frac{n+l-3}{2}}|+\sum_{i=0}^{\lfloor\frac{n+l}{2}-2\rfloor} (|\mathcal{F}^i|+|\mathcal{F}^{n+l-3-i}|)+\sum_{i=n+l-2}^n |\mathcal{F}^i| \leq |\mathcal{F}^{\frac{n+l-3}{2}}|+\sum_{i=\lceil\frac{n+l}{2}-1\rceil}^n {n \choose i}.
\end{equation}
Let $l\geq 3$. Since $\mathcal{F}$ is $(l-2)$-intersecting, $|\mathcal{F}^i|=0$ for all $i<l-2$. Using (\ref{fplusf}), we get
\begin{equation}
|\mathcal{F}|=\sum_{i=l-3}^n |\mathcal{F}^i|= |\mathcal{F}^{\frac{n+l-3}{2}}|+\sum_{i=l-3}^{\lfloor\frac{n+l}{2}-2\rfloor} (|\mathcal{F}^i|+|\mathcal{F}^{n+l-3-i}|)\leq |\mathcal{F}^{\frac{n+l-3}{2}}|+\sum_{i=\lceil\frac{n+l}{2}-1\rceil}^n {n \choose i}.
\end{equation}

We got the same inequality in the two cases. The upper bounds of the theorem follow immediately, since $\mathcal{F}^{\frac{n+l-3}{2}}=\emptyset$, if $n+l$ is even, and $|\mathcal{F}^{\frac{n+l-3}{2}}|\le AK\left(n, \frac{n+l-3}{2}, l\right)$, if $n+l$ is odd (see (\ref{ahls})).

To verify that the given bounds are best possible, consider the following union-$l$-intersecting set systems. When $n+l$ is even, take all the subsets of size at least $\frac{n+l}{2}-1$. When $n+l$ is odd, take all the subsets of size at least $\frac{n+l-1}{2}$ and an $\frac{n+l-3}{2}$-uniform $l$-intersecting set system of size $AK\left(n, \frac{n+l-3}{2}, l\right)$.
\end{proof}

\begin{remark} \label{kornerremark}
Let us formulate the special case $l=1$ what was originally asked by K\"{o}rner. The Erd\H{o}s-Ko-Rado theorem \cite{ekr} states that the size of the largest $k$-uniform intersecting system of subsets of $[n]$ is ${n-1\choose k-1}$, when $n\ge 2k$. It means that $AK(n,\frac{n}{2}-1,1)={n-1\choose \frac{n}{2}-2}$, so the best upper bound for $|\mathcal{F}|$ when $l=1$ is
\begin{equation} |\mathcal{F}|\leq \begin{cases} \displaystyle\sum_{i=(n-1)/2}^n {n \choose i} & \mbox{if } n\mbox{ is odd,} \\ {n-1 \choose \frac{n}{2}-2}+\displaystyle\sum_{i=\frac{n}{2}}^n {n \choose i} & \mbox{if } n\mbox{ is even.} \end{cases} \end{equation}
\end{remark}

\section{Considering the union of more subsets}~

In this section we investigate a variation of the problem where we take the union of $s$ and $t$ subsets instead of 2 and 2.

\begin{definition}
A set system $\mathcal{F}$ is called $(s,t)$-union-intersecting if it has the property that for all $s+t$ pairwise different sets $F_1, F_2, \dots F_s, G_1, \dots G_t\in\mathcal{F}$
\begin{equation}
\left(\bigcup_{i=1}^s F_i\right) \cap \left(\bigcup_{j=1}^t G_j\right) \not= \emptyset.
\end{equation}
The size of the largest $(s,t)$-union-intersecting system whose elements are subsets of $[n]$ is denoted by $f(n,s,t)$.
\end{definition}

In this section we determine the value of $f(n,s,t)$ exactly when $s+t\leq 4$ and asymptotically in the other cases. Since $f(n,s,t)=f(n,t,s)$, we can assume that $s\leq t$.

\begin{theorem}\label{thmb} Let $n\ge 3$.
\begin{enumerate}[a)]
\item \begin{equation} f(n,1,1)=2^{n-1}. \end{equation}
\item \begin{equation} f(n,1,2) = \begin{cases} \displaystyle\sum_{i=n/2}^n {n \choose i} & \mbox{if } n\mbox{ is even,} \\ {n-1 \choose \frac{n-3}{2}}+\displaystyle\sum_{i=(n+1)/2}^n {n \choose i} & \mbox{if } n\mbox{ is odd.} \end{cases} \end{equation}
\item \begin{equation} f(n,2,2) = \begin{cases} \displaystyle\sum_{i=(n-1)/2}^n {n \choose i} & \mbox{if } n\mbox{ is odd,} \\ {n-1 \choose \frac{n}{2}-2}+\displaystyle\sum_{i=n/2}^n {n \choose i} & \mbox{if } n\mbox{ is even.} \end{cases} \end{equation}
\item \begin{equation} f(n,1,3) = \begin{cases} \displaystyle\sum_{i=n/2}^n {n \choose i} & \mbox{if } n\mbox{ is even,} \\ {n-1 \choose \frac{n-1}{2}}+\displaystyle\sum_{i=(n+1)/2}^n {n \choose i} & \mbox{if } n\mbox{ is odd.} \end{cases} \end{equation}
\item If $t\ge 4$, then
\begin{equation}2^{n-1}+\frac{1}{2}{n \choose \lfloor n/2\rfloor} \leq f(n,1,t) \leq 2^{n-1}+{n \choose \lfloor n/2\rfloor}\left(\frac{1}{2}+\frac{t-2}{n}+O(n^{-2})\right). \end{equation}
\item If $s\ge 2$ and $t\ge 3$, then
\begin{equation}2^{n-1}+{n \choose \lfloor n/2\rfloor}\frac{n}{n+2} \leq f(n,s,t) \leq 2^{n-1}+{n \choose \lfloor n/2\rfloor}\left(1+\frac{t+s-3}{n}+O(n^{-2})\right). \end{equation}
\end{enumerate}
\end{theorem}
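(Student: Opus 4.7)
My plan is to first reduce to $\mathcal{F}$ being an upset via the standard replacement trick (if $A\in\mathcal{F}$, $A\subsetneq B$, $B\notin\mathcal{F}$, replace $A$ by $B$; unions only grow, so this preserves both size and $(s,t)$-union-intersecting), and then handle each of the six parts separately. Part~(a) is the classical intersecting-family bound obtained by pairing $A$ with $[n]\setminus A$. Part~(c) is immediate from Theorem~\ref{thma} with $l=1$ since the $(2,2)$-union-intersecting condition is literally union-$1$-intersecting (this is the content of Remark~\ref{kornerremark}).

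For parts~(b) and~(d) the plan is to mimic the proof of Theorem~\ref{thma}. The key lemma is: on an upset, $(1,t)$-union-intersecting forces any two disjoint $A,B\in\mathcal{F}$ to satisfy $|A|+|B|\geq n-t+2$. Indeed, if $|A|+|B|\leq n-t+1$ there are at least $t-1$ elements $x_1,\dots,x_{t-1}\in[n]\setminus(A\cup B)$, and the $t+1$ distinct sets $B,A,A\cup\{x_1\},\dots,A\cup\{x_{t-1}\}$ all lie in $\mathcal{F}$ (by the upset property) while $B$ is disjoint from the union of the last $t$, giving a $(1,t)$-violation. For $t=2$ this forces disjoint pairs in $\mathcal{F}$ to be complementary, and the Hall-matching argument of Theorem~\ref{thma} with $l=2$ (combined with an Ahlswede-Khachatrian bound on the middle layer when $n$ is odd) yields~(b). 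For $t=3$ the same matching gives $|\mathcal{F}^{i}|+|\mathcal{F}^{n-2-i}|\leq\binom{n}{n-2-i}$; the extra term in the odd-$n$ formula of~(d) comes from an Ahlswede-Khachatrian-type estimate on the middle layer, and for even $n$ one must rule out small layers via the observation $|\mathcal{F}\cap 2^{[n]\setminus A}|\leq 2$ for each nonempty $A\in\mathcal{F}$---this last step is where I would expect the most delicate bookkeeping.

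For parts~(e) and~(f) I would split $\mathcal{F}=\mathcal{F}_0\sqcup\mathcal{F}_1$ by membership of the element~$1$, giving $|\mathcal{F}_1|\leq 2^{n-1}$ for free, and then bound $|\mathcal{F}_0|\subseteq 2^{[n]\setminus\{1\}}$ by showing that $\mathcal{F}_0$ is $K_{1,t-1}$-free (for~(e)) or $K_{s,t}$-free (for~(f)) on the $(n-1)$-element ground set. Assuming this, Theorem~\ref{rfork} or Theorem~\ref{kxy} applied to $\mathcal{F}_0$ together with the identity $\binom{n-1}{\lfloor(n-1)/2\rfloor}=\tfrac{1}{2}\binom{n}{\lfloor n/2\rfloor}(1+O(1/n))$ converts the bound into the claimed asymptotic form. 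To establish $K_{s,t}$-freeness, suppose toward contradiction that $A_1,\dots,A_s\subsetneq B_1,\dots,B_t$ is such a subposet inside $\mathcal{F}_0$; using the upset property together with the special role of $1\notin\bigcup_j B_j$ one can locate sets of $\mathcal{F}$ lying inside $[n]\setminus\bigcup_j B_j$ which, combined with the $B_j$'s, yield $s+t$ distinct sets in $\mathcal{F}$ whose $s$- and $t$-unions are disjoint, contradicting the $(s,t)$-union-intersecting property.

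For the lower bounds I would give explicit families: for~(a) all sets containing a fixed element; for~(b)--(d) all sets above the appropriate size threshold, augmented by a middle-layer Ahlswede-Khachatrian system when parity requires; and for~(e)--(f) all sets of size $\geq n/2$ (or the analogous $\geq(n+1)/2$ version augmented by an $(n-1)/2$-uniform family avoiding a fixed element), which is easily checked to be $(s,t)$-union-intersecting since disjointness forces one set to be essentially the complement of the other. The principal obstacle in the whole argument is the $K_{s,t}$-freeness claim in parts~(e) and~(f): the $(s,t)$-union-intersecting condition is a \emph{disjointness} pattern whereas a $K_{s,t}$-subposet is a \emph{containment} pattern, so bridging the two requires invoking the upset hypothesis carefully and exploiting that the element $1$ does not belong to any $B_j$.
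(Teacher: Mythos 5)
Your parts (a), (b) and (c) are fine and essentially coincide with the paper's arguments (the disjoint-pair observation plus the Hall matching between layers $i$ and $n-1-i$, EKR on the middle layer for odd $n$, and the reduction of (c) to Theorem~\ref{thma} with $l=1$). The problems begin at part (d). Your matching inequality $|\mathcal{F}^i|+|\mathcal{F}^{n-2-i}|\leq\binom{n}{n-2-i}$ leaves the layers $n-1$, $n$ and, for even $n$, the self-paired layer $n/2-1$ uncovered; the best you can then say about that layer is that it is intersecting, giving $|\mathcal{F}|\leq\binom{n-1}{n/2-2}+\sum_{i=n/2}^{n}\binom{n}{i}$, which overshoots the claimed bound by a term of order $\binom{n}{n/2}$. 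Your proposed fix (``$|\mathcal{F}\cap 2^{[n]\setminus A}|\leq 2$'' plus ``delicate bookkeeping'') is not an argument: you would have to show that every set placed in layer $n/2-1$ forces a compensating deficit in the layers above, and nothing in your setup does that. The paper sidesteps this entirely by passing to $\mathcal{G}=\mathcal{F}\cap\mathcal{F}'$ with $\mathcal{F}'=\{[n]\setminus F: F\in\mathcal{F}\}$, showing $\mathcal{G}$ is $K_{12}$- and $K_{21}$-free, and applying the Katona--Tarj\'an theorem together with $2|\mathcal{F}|\leq 2^n+|\mathcal{G}|$; this yields $|\mathcal{F}|\leq 2^{n-1}+\binom{n-1}{\lfloor(n-1)/2\rfloor}$ exactly.

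Parts (e) and (f) contain an outright false claim. The family $\mathcal{F}_0=\{F\in\mathcal{F}: 1\notin F\}$ is \emph{not} $K_{s,t}$-free (nor $K_{1,t-1}$-free): take $\mathcal{F}$ to be all subsets of size at least $n/2$, which is $(s,t)$-union-intersecting for $s,t\ge 2$ (and $(1,t)$-union-intersecting for $t\ge2$), and observe that the sets of size $\geq n/2$ avoiding the element $1$ contain enormous copies of $K_{s,t}$ --- e.g.\ $s$ sets of size $n/2$ inside a common $(n/2+1)$-set and $t$ common supersets of size $n-2$. Your proposed bridge fails precisely where you flagged the difficulty: a configuration $A_i\subset B_j$ in $\mathcal{F}_0$ gives $\bigcup_i A_i\subseteq\bigcap_j B_j$, and to produce a disjointness violation you need $t$ members of $\mathcal{F}$ whose union avoids $\bigcap_j B_j$; the upset property only supplies \emph{larger} sets, so there is no reason such members exist, and the fact that $1\notin B_j$ gives you nothing. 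The correct object is again $\mathcal{G}=\mathcal{F}\cap\mathcal{F}'$: there the complements $[n]\setminus B_j$ are guaranteed to lie in $\mathcal{F}$ by the very definition of $\mathcal{G}$, and $\bigl(\bigcup_i A_i\bigr)\cap\bigl(\bigcup_j([n]\setminus B_j)\bigr)=\emptyset$ gives the contradiction; Theorems~\ref{rfork} and~\ref{kxy} applied to $\mathcal{G}$, with $2|\mathcal{F}|\leq 2^n+|\mathcal{G}|$, then give the stated upper bounds. Finally, your lower bound for (f) is too weak: all sets of size $\geq n/2$ only gives $2^{n-1}+\frac12\binom{n}{\lfloor n/2\rfloor}$, whereas the claimed bound $2^{n-1}+\binom{n}{\lfloor n/2\rfloor}\frac{n}{n+2}$ requires the $f(n,2,2)$-extremal family from part (c), which additionally includes an intersecting family in layer $\frac n2-1$.
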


\begin{proof}
\begin{enumerate}[a)]
\item (See \cite{ekr}.) $f(n,1,1)$ is the size of the largest intersecting system among the subsets of $[n]$. It is at most $2^{n-1}$, since a subset and its complement cannot be in the intersecting system at the same time. By choosing all the subsets containing a fixed element, we get an intersecting system of size $2^{n-1}$.

\item Let $\mathcal{F}$ be a $(1,2)$-union-intersecting system. We can assume that $\mathcal{F}$ is an upset. Note that if $A,B\in\mathcal{F}$, $A\cap B=\emptyset$, and $|A\cup B|=n-1$, then $A$ and $B$ can not be in $\mathcal{F}$ at the same time. To see this, take let $\{x\}=[n]-(A\cup B)$. Then $A\cap(B\cup (B\cup \{x\}))=A\cap(B\cup \{x\})=\emptyset$.

For all $0\le i < \frac{n-1}{2}$ define the bipartite graph $G_i(S_i, T_i, E_i)$ as follows. Let $S_i$ be the set of all the $i$-element subsets of $[n]$, let $T_i$ be the set of subsets of size $n-1-i$, and connect two sets $A\in S_i$ and $B\in T_i$ if they are disjoint. Then both vertex classes contain vertices of the same degree, so it follows by Hall's theorem that there is a matching that covers the smaller vertex class, that is $S_i$. Since at most one of two matched subsets can be in $\mathcal{F}$, it follows that
\begin{equation}\label{fplusf2} |\mathcal{F}^i|+|\mathcal{F}^{n-1-i}|\leq {n \choose n-1-i} ~~~~~~~\left(0\le i < \frac{n-1}{2}\right).\end{equation}
When $n$ is even, these inequalities together imply
\begin{equation} |\mathcal{F}|\leq \sum_{i=n/2}^{n} {n \choose i}.\end{equation}

Assume that $n$ is odd. Then $\mathcal{F}^{\frac{n-1}{2}}$ is an intersecting family. $A,B \in\mathcal{F}^{\frac{n-1}{2}}$, $A\cap B= \emptyset$ and $\mathcal{F}$ being an upset would imply that there is a set $C\in\mathcal{F}$ such that $B\subset C$ and $|A\cap (B \cup C)| = |A\cap C|= \emptyset$. So the Erd\H{o}s-Ko-Rado theorem provides an upper bound:
\begin{equation} |\mathcal{F}^{\frac{n-1}{2}}|\le {n-1 \choose \frac{n-3}{2}}.\end{equation}
This, together with the inequalities of (\ref{fplusf2}) implies
\begin{equation} |\mathcal{F}| \leq {n-1 \choose \frac{n-3}{2}}+\sum_{i=(n+1)/2}^n {n \choose i}.\end{equation}

To verify that the given bounds are best possible, consider the following (1,2)-union-intersecting set systems. When $n$ is even, take all the subsets of size at least $\frac{n}{2}$. When $n$ is odd, take all the subsets of size at least $\frac{n+1}{2}$ and the subsets of size $\frac{n-1}{2}$ containing a fixed element.

\item We already solved this problem as the case $l=1$ in Theorem \ref{thma}. (See Remark \ref{kornerremark}.)

\item Let $\mathcal{F}$ be a $(1,3)$-union-intersecting system of subsets of $[n]$. Let $\mathcal{F}'=\{[n]-F~\big|~F\in\mathcal{F}\}$, and let $\mathcal{G}=\mathcal{F}\cap\mathcal{F}'$. Now we prove that $\mathcal{G}$ is $K_{1,2}$-free and $K_{2,1}$-free. (See Section 1 for the definitions.) Since $\mathcal{G}$ is invariant to taking complements, it is enough to show that $\mathcal{G}$ is $K_{12}$-free. Assume that there are three pairwise different sets $A,B,C\in\mathcal{G}$, such that $A\subset B$ and $A\subset C$. Then the sets $A,[n]-A, [n]-B, [n]-C\in\mathcal{F}$ would satisfy
    \begin{equation}A\cap (([n]-A)\cup ([n]-B)\cup ([n]-C))=A\cap([n]-A)=\emptyset. \end{equation}

Theorem \ref{tarjanthm} gives us the following upper bound for a set system that is $K_{12}$-free and $K_{21}$-free:
\begin{equation}
|\mathcal{G}|\leq 2{n-1 \choose \lfloor \frac{n-1}{2} \rfloor}.
\end{equation}

Since $2|\mathcal{F}|\leq 2^n+|\mathcal{G}|$, we have
\begin{equation}
|\mathcal{F}|\leq 2^{n-1}+{n-1 \choose \lfloor \frac{n-1}{2} \rfloor}=\begin{cases} \displaystyle\sum_{i=n/2}^n {n \choose i} & \mbox{if } n\mbox{ is even,} \\ {n-1 \choose \frac{n-1}{2}}+\displaystyle\sum_{i=(n+1)/2}^n {n \choose i} & \mbox{if } n\mbox{ is odd.} \end{cases}
\end{equation}

To verify that the given bounds are best possible, consider the following (1,3)-union-intersecting set systems. When $n$ is even, take all the subsets of size at least $\frac{n}{2}$. When $n$ is odd, take all the subsets of size at least $\frac{n+1}{2}$ and the subsets of size $\frac{n-1}{2}$ {\it not} containing a fixed element.

\item Let $\mathcal{F}$ be a $(1,t)$-union-intersecting system of subsets of $[n]$. Define $\mathcal{G}$ as above, and note that $\mathcal{G}$ is $K_{1,t-1}$-free. Theorem \ref{rfork} implies
\begin{equation}
|\mathcal{G}|\leq {n\choose \lfloor n/2 \rfloor}
\left(1+\frac{2(t-2)}{n}+O(n^{-2})\right).
\end{equation}
Since $2|\mathcal{F}|\leq 2^n+|\mathcal{G}|$, we have
\begin{equation}
|\mathcal{F}|\leq 2^{n-1}+{n \choose \lfloor n/2\rfloor}\left(\frac{1}{2}+\frac{t-2}{n}+O(n^{-2})\right).
\end{equation}
The lower bound follows obviously from
\begin{equation}
f(n,1,t)\geq f(n,1,3)=2^{n-1}+{n-1 \choose \lfloor \frac{n-1}{2} \rfloor} \geq 2^{n-1}+\frac{1}{2}{n \choose \lfloor n/2\rfloor}.
\end{equation}

\item Let $\mathcal{F}$ be an $(s,t)$-union-intersecting system of subsets of $[n]$. Define $\mathcal{G}$ as above. Now we prove that $\mathcal{G}$ is $K_{s,t}$-free. Assume that $A_1, A_2, \dots A_s, B_1, \dots B_t\in\mathcal{G}$ are pairwise different subsets and $A_i\subset B_j$ for all $(i,j)$. Then $\left(\displaystyle\bigcup_{i=1}^s A_i\right) \cap \left(\displaystyle\bigcup_{j=1}^t ([n]-B_j)\right) = \emptyset$. It is a contradiction, since $[n]-B_j\in\mathcal{F}$ for all $j$.

    Theorem \ref{kxy} implies
\begin{equation}
|\mathcal{G}|\leq {n\choose \lfloor n/2 \rfloor} \left(2+\frac{2(s+t-3)}{n}+O(n^{-2})\right).
\end{equation}
Since $2|\mathcal{F}|\leq 2^n+|\mathcal{G}|$, we have
\begin{equation}
|\mathcal{F}|\leq 2^{n-1}+{n \choose \lfloor n/2\rfloor}\left(1+\frac{t+s-3}{n}+O(n^{-2})\right).
\end{equation}

The lower bound follows from
\begin{equation}
f(n,s,t)\geq f(n,2,2) \geq 2^{n-1}+{n \choose \lfloor n/2\rfloor}\frac{n}{n+2}.
\end{equation}
(The second inequaility can be verified by elementary calculations since the exact value of $f(n,2,2)$ is known. Equality holds when $n$ is even.)

\end{enumerate}
\end{proof}

\section{The k-uniform case}~

In this section we determine the size of the largest $k$-uniform $(s,t)$-union-intersecting set system of subsets of $[n]$ for all large enough $n$.

\begin{theorem}\label{uniformthm}
Assume that $1\leq s \leq t$ and $\mathcal{F}\subset {[n] \choose k}$ is an $(s,t)$-union-intersecting set system. Then
$$|\mathcal{F}|\leq {n-1 \choose k-1}+s-1$$
holds for all $n>n(k,t)$.
\end{theorem}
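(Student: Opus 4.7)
The plan is to argue by contradiction: assume $|\mathcal{F}| > \binom{n-1}{k-1} + s - 1$ and exhibit $s+t$ sets of $\mathcal{F}$ that violate $(s,t)$-union-intersection. Let $x$ be an element of maximum degree, so $d := d(x) \leq \binom{n-1}{k-1}$ forces $|\mathcal{F} \setminus \mathcal{F}_x| \geq s$. Pick distinct $H_1, \dots, H_s \in \mathcal{F}$ avoiding $x$ and let $U = H_1 \cup \cdots \cup H_s$, so $|U| \leq sk$ and $x \notin U$. Applying the $(s,t)$-UI property with $H_1,\dots,H_s$ on the $s$-side says that at most $t-1$ sets of $\mathcal{F}\setminus\{H_1,\dots,H_s\}$ can be disjoint from $U$, so the whole task reduces to producing $t$ such sets.

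The argument splits on $d$. In the \emph{star case} $d \geq \binom{n-1}{k-1} - \binom{n-1-sk}{k-1} + t$: since the number of $k$-sets through $x$ that meet $U$ is at most $\binom{n-1}{k-1} - \binom{n-1-|U|}{k-1}$, at least $t$ members of $\mathcal{F}_x$ are disjoint from $U$, and together with $H_1,\dots,H_s$ they form the forbidden configuration. In the \emph{spread case} $d < \binom{n-1}{k-1} - \binom{n-1-sk}{k-1} + t =: D$, every element of $[n]$ has degree at most $D = O(n^{k-2})$. A greedy matching argument (at each step pick a remaining set and delete it together with the at most $kD$ sets meeting it) then produces a matching in $\mathcal{F}$ of size at least $|\mathcal{F}|/(kD)$, which exceeds $s+t$ once $n$ is large, and $s+t$ pairwise disjoint sets immediately violate $(s,t)$-UI.

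The main difficulty is quantitative control of the threshold in the spread case. Since $|\mathcal{F}|/(kD)$ is of order $n/s$, the naive bound requires $n$ to grow polynomially with $s$, whereas the statement claims $n$ depends only on $k$ and $t$. The natural way to close this gap is to strengthen the star case by a $\Delta$-system argument on the link family $\{F\setminus\{x\} : F \in \mathcal{F}_x\}$: if this $(k-1)$-uniform family contains a matching of size $sk+t$, then $\mathcal{F}_x$ carries a sunflower of size $sk+t$ with singleton kernel $\{x\}$, and at most $|U| \leq sk$ of its petals can meet $U$, leaving at least $t$ members disjoint from $U$ regardless of how $H_1,\dots,H_s$ were chosen. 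Producing such a matching via a Frankl-type matching theorem (or directly applying the sunflower lemma to the link and using $(s,t)$-UI to rule out a larger kernel) is the step where I expect a threshold $n(k,t)$ that is free of $s$ to actually emerge; the very-large-$s$ regime $s \geq \binom{n-1}{k}+1$ is in any case handled trivially because then $\binom{n-1}{k-1}+s-1 \geq \binom{n}{k}$.
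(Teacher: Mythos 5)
Your star/spread dichotomy at a maximum-degree element $x$ is correct and, contrary to what you fear, already a complete proof: the ``main difficulty'' you flag is illusory because the theorem hypothesizes $s\le t$. In the spread case every degree is at most $D\le sk\binom{n-2}{k-2}+t$, so the greedy matching has size at least about $\frac{n-1}{k^2(k-1)s}$, and requiring this to exceed $s+t\le 2t$ needs only $n\ge C(k)\,t^2$ --- a threshold of the exact form $n(k,t)$ claimed. (The paper's own proof leans on $s\le t$ in precisely the same way: it needs $\binom{n-1}{k-1}>k!(ks+t)^k$, which is a condition on $k$ and $t$ only because $ks+t\le (k+1)t$.) So your entire last paragraph --- the $\Delta$-system strengthening of the star case, the Frankl-type matching theorem, the hope of ``ruling out a larger kernel'' --- should simply be deleted; as sketched it is also the shakiest part, since applying the sunflower lemma to the link of $x$ gives no control on the core, and the $(s,t)$-union-intersecting property cannot forbid non-singleton cores (the full star at $x$ already contains sunflowers with arbitrary cores through $x$).

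For comparison, the paper takes a different route: it applies the Erd\H{o}s--Rado sunflower lemma (Lemma \ref{sunlemma}) to all of $\mathcal{F}$ with $r=ks+t$ petals, lets $M$ be the core, shows that at most $s-1$ members of $\mathcal{F}$ miss $M$ (because $s$ such sets would leave $t$ of the pairwise disjoint petals untouched), and then splits on whether some $a_i\in M$ lies in all but at most $s-1$ sets (the star bound finishes) or every $\mathcal{F}_{a_i}$ is opposed by at least $s$ sets avoiding $a_i$, in which case Lemmas \ref{uniformla} and \ref{uniformlb} force $|\mathcal{F}_{a_i}|=O(n^{k-2})$ and hence $|\mathcal{F}|=O(n^{k-2})$, a contradiction for large $n$. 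Your version replaces the sunflower machinery and both auxiliary lemmas with an elementary degree count and a greedy matching, and yields a more explicit threshold; the paper's decomposition over the core $M$ is essentially the correct realization of the idea you were groping toward in your last paragraph, namely how to handle sunflowers whose core is larger than $\{x\}$.
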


\begin{remark}
There is $k$-uniform $(s,t)$-union-intersecting set system of size ${n-1 \choose k-1}+s-1$. Take all $k$-element sets containing a fixed element, then take $s-1$ arbitrary sets of size $k$.
\end{remark}

We need some preparation before we can start the proof of Theorem \ref{uniformthm}.

\begin{definition}
A sunflower (or $\Delta$-system) with $r$ petals and center $M$ is a family $\{S_1, S_2,\dots S_r\}$ where $S_i\cap S_j=M$ for all $1\leq i<j\leq r$.
\end{definition}

\begin{lemma} {\rm (Erd\H{o}s-Rado \cite{sunflower})} \label{sunlemma}
Assume that $\mathcal{A}\subset {[n] \choose k}$ and $|\mathcal{A}|>k!(r-1)^k$. Then $\mathcal{A}$ contains a sunflower with $r$ petals as a subfamily.
\end{lemma}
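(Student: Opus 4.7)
The plan is to prove the Erdős–Rado sunflower lemma by induction on the uniformity $k$, which is the standard and cleanest approach.

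For the base case $k=1$, every member of $\mathcal{A}$ is a singleton, and the hypothesis $|\mathcal{A}| > (r-1)$ forces $\mathcal{A}$ to contain at least $r$ distinct singletons; any $r$ of them are pairwise disjoint and thus form a sunflower with empty center.

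For the inductive step, assume the lemma holds for $(k-1)$-element sets with threshold $(k-1)!(r-1)^{k-1}$, and let $\mathcal{A} \subset \binom{[n]}{k}$ with $|\mathcal{A}| > k!(r-1)^k$. Choose a maximal subfamily $\mathcal{S} = \{S_1, \ldots, S_t\} \subset \mathcal{A}$ whose members are pairwise disjoint. If $t \ge r$, then $\mathcal{S}$ itself is a sunflower with $r$ petals (center $\emptyset$) and we are done. Otherwise $t \le r-1$, so the union $U = S_1 \cup \cdots \cup S_t$ has $|U| \le k(r-1)$. By maximality of $\mathcal{S}$, every $A \in \mathcal{A}$ meets $U$, so by pigeonhole some element $x \in U$ lies in at least
\[
\frac{|\mathcal{A}|}{|U|} \ge \frac{|\mathcal{A}|}{k(r-1)} > \frac{k!(r-1)^k}{k(r-1)} = (k-1)!(r-1)^{k-1}
\]
members of $\mathcal{A}$. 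Let $\mathcal{A}_x = \{A \in \mathcal{A} : x \in A\}$, and set $\mathcal{A}' = \{A \setminus \{x\} : A \in \mathcal{A}_x\}$, a family of $(k-1)$-element subsets of $[n] \setminus \{x\}$ with $|\mathcal{A}'| = |\mathcal{A}_x| > (k-1)!(r-1)^{k-1}$.

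By the inductive hypothesis, $\mathcal{A}'$ contains a sunflower with $r$ petals, say with center $M'$ and petals $P_1, \ldots, P_r$. Re-adding $x$ gives pairwise intersections equal to $M' \cup \{x\}$ among the corresponding sets in $\mathcal{A}_x$, yielding the desired sunflower in $\mathcal{A}$. The only subtlety is the bookkeeping in the pigeonhole step: one must arrange the bound $k!(r-1)^k$ so that after dividing by $k(r-1)$ the resulting quantity is strictly greater than $(k-1)!(r-1)^{k-1}$, which is exactly why the factor $k!$ (rather than something smaller) appears in the statement. No other obstacle should arise.
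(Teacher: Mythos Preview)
Your proof is correct and follows essentially the same approach as the paper's: induction on $k$, a maximal pairwise-disjoint subfamily to bound the covering set $U$, and the link $\{A\setminus\{x\}:x\in A\in\mathcal{A}\}$ to reduce to uniformity $k-1$. The only cosmetic differences are that the paper uses base case $k=0$ and phrases the induction contrapositively (bounding $|\mathcal{A}|$ by summing $|\mathcal{A}_x|$ over all $x\in U$), whereas you argue directly via pigeonhole on a single $x$; these are equivalent formulations of the same argument.
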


\begin{proof}
We prove the lemma by induction on $k$. The statement of the lemma is obviously true when $k=0$. Let $\mathcal{A}\subset {[n] \choose k}$ a set system not containing a sunflower with $r$ petals. Let $\{A_1, A_2, \dots A_m\}$ be a maximal family of pairwise disjoint sets in $\mathcal{A}$. Since pairwise disjoint sets form a sunflower, $m\leq r-1$. For every $x\in\displaystyle\bigcup_{i=1}^m A_i$, let $\mathcal{A}_x=\{S-\{x\}~\big|~ S\in\mathcal{A},~ x\in S\}$. Then each $\mathcal{A}_x$ is a $k-1$-uniform set system not containing sunflowers with $r$ petals. By induction we have $|\mathcal{A}_x|\leq (k-1)!(r-1)^{k-1}$. Then
\begin{equation}
|\mathcal{A}|\leq (k-1)!(r-1)^{k-1}\left|\bigcup_{i=1}^m A_i \right|\leq (k-1)!(r-1)^{k-1}\cdot k(r-1)=k!(r-1)^k.
\end{equation}
\end{proof}

\begin{lemma}\label{uniformla}
Let $c$ be a fixed positive integer. If $\mathcal{A}\subset {[n] \choose k}$, $K\subset [n]$, $|K|\leq c$ and $|A\cap K|\geq 2$ holds for every $A\in\mathcal{A}$, then $|\mathcal{A}|\leq O(n^{k-2})$.
\end{lemma}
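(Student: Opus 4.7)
The plan is to bound $|\mathcal{A}|$ by double counting the pairs from $K$ that are contained in a set of $\mathcal{A}$. For each unordered pair $\{x,y\} \subset K$, let $\mathcal{A}_{xy} = \{A \in \mathcal{A} : \{x,y\} \subset A\}$. Since every $k$-element subset of $[n]$ containing a fixed $2$-element set is obtained by choosing the remaining $k-2$ elements from the other $n-2$, we have $|\mathcal{A}_{xy}| \le \binom{n-2}{k-2}$.

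Next, the hypothesis $|A \cap K| \ge 2$ for every $A \in \mathcal{A}$ says that each $A$ contains at least one pair $\{x,y\} \subset K$. Hence
\begin{equation}
|\mathcal{A}| \le \sum_{\{x,y\} \subset K} |\mathcal{A}_{xy}| \le \binom{|K|}{2}\binom{n-2}{k-2} \le \binom{c}{2}\binom{n-2}{k-2}.
\end{equation}
Since $c$ and $k$ are fixed constants, $\binom{c}{2}$ is a constant and $\binom{n-2}{k-2} = O(n^{k-2})$, giving the claimed bound.

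There is essentially no obstacle here; the statement is a direct consequence of the trivial bound on the number of $k$-sets through a given pair, combined with the fact that $|K|$ is bounded. The only thing to watch for is the $O(\cdot)$ notation, which absorbs the constants $\binom{c}{2}$ and the factorials appearing in $\binom{n-2}{k-2}/n^{k-2}$.
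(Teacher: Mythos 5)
Your proof is correct and follows essentially the same route as the paper: both are direct counting arguments exploiting that $|K|$ is bounded by the constant $c$, the only cosmetic difference being that you sum over pairs $\{x,y\}\subset K$ while the paper sums over the exact intersection size $i\geq 2$ with a $c$-element superset of $K$. Either way the bound is a constant times $\binom{n-2}{k-2}=O(n^{k-2})$.
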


\begin{proof}
Choose a set $K\subset K'$ with $|K'|=c$. The conditions of the lemma are also satisfied with $K'$.
\begin{equation}
|\mathcal{A}|\leq \sum_{i=2}^c {c \choose i}{n-c \choose k-i}.
\end{equation}
The right hand side is a polynomial of $n$ with degree $k-2$, so $|\mathcal{A}|\leq O(n^{k-2})$.
\end{proof}

\begin{lemma}\label{uniformlb}
Let $s,t$ be fixed positive integers. Let $\mathcal{A}, \mathcal{B}\subset {[n] \choose k}$. Assume that $|\mathcal{B}|\geq s$, $\mathcal{A}\cup\mathcal{B}$ is $(s,t)$-union-intersecting, and there is an element $a$ such that $a\in A$ holds for every $A\in\mathcal{A}$, and $a\not\in\mathcal{B}$ holds for every $B\in\mathcal{B}$. Then $|\mathcal{A}|\leq O(n^{k-2})$.
\end{lemma}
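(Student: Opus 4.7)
The plan is to exploit the $s$ sets guaranteed in $\mathcal{B}$ as a small ``witness'' that forces almost every $A\in\mathcal{A}$ to intersect it in at least two points, so that Lemma~\ref{uniformla} delivers the $O(n^{k-2})$ bound.

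First I would fix any $s$ distinct sets $B_1,\ldots,B_s\in\mathcal{B}$ (possible because $|\mathcal{B}|\ge s$) and form $U=\bigcup_{i=1}^s B_i$, a set of bounded size $|U|\le sk$. The crucial structural fact is that $a\notin U$, while $a\in A$ for every $A\in\mathcal{A}$; in particular $\mathcal{A}$ and $\mathcal{B}$ are disjoint as families. I would then split $\mathcal{A}$ according to whether $A\cap U$ is empty or not, and bound the two parts separately.

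For the sets of $\mathcal{A}$ disjoint from $U$, the $(s,t)$-union-intersecting property applied to $B_1,\ldots,B_s$ on one side and any $t$ such sets on the other (all $s+t$ of them automatically pairwise distinct by the $a$-separation) would force $U$ and the union of the $t$ chosen sets to meet, a contradiction. Hence there can be at most $t-1$ sets of $\mathcal{A}$ disjoint from $U$.

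For the sets of $\mathcal{A}$ meeting $U$, I would take $K=U\cup\{a\}$, so $|K|\le sk+1$, a constant in $n$. Because $a\notin U$, every such $A$ satisfies $|A\cap K|\ge 2$, so Lemma~\ref{uniformla} bounds their number by $O(n^{k-2})$. Summing the two parts gives $|\mathcal{A}|\le (t-1)+O(n^{k-2})=O(n^{k-2})$. I do not anticipate any real obstacle; the only point requiring attention is the distinctness demanded by the $(s,t)$-union-intersecting condition, which is immediate from the fact that $a$ separates the two families.
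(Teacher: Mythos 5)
Your proposal is correct and follows essentially the same argument as the paper: choose $s$ sets from $\mathcal{B}$, observe that at most $t-1$ members of $\mathcal{A}$ can avoid their union, and apply Lemma~\ref{uniformla} with $K=\{a\}\cup\bigcup_{i=1}^s B_i$ to the rest. Your explicit remark on the pairwise distinctness of the $s+t$ sets (guaranteed by the separating element $a$) is a point the paper leaves implicit.
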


\begin{proof}
Choose $s$ different sets  $B_1, B_2, \dots B_s\in\mathcal{B}$. Let $\mathcal{A}'= \{ A\in\mathcal{A} ~\big|~ A\cap\displaystyle\bigcup_{i=1}^s B_i\not=\emptyset\}$. Since $\mathcal{A}\cup\mathcal{B}$ is $(s,t)$-union-intersecting, $|\mathcal{A}-\mathcal{A}'|\leq t-1$.
\begin{equation} \left|A\cap\left(\{a\}\cup \bigcup_{i=1}^s B_i\right)\right|\geq 2 \end{equation}
holds for all $A\in\mathcal{A'}$. Use Lemma \ref{uniformla} with $K=\{a\}\cup \displaystyle\bigcup_{i=1}^s B_i$ and $c=sk+1$, it implies $|\mathcal{A}'|\leq O(n^{k-2})$. Then $|\mathcal{A}|\leq O(n^{k-2})+(t-1)=O(n^{k-2})$.
\end{proof}

\begin{proof} (of Theorem \ref{uniformthm})
Use Lemma \ref{sunlemma} with $r=ks+t$. If $|\mathcal{F}|>k!(ks+t)^k$, then $\mathcal{F}$ contains a sunflower $\{S_1, S_2, \dots S_{ks+t}\}$ as a subfamily. (Note that ${n-1\choose k-1}>k!(ks+t)^k$ holds for large enough $n$.) Let $M$ denote the center of the sunflower and introduce the notations $|M|=\{a_1, a_2, \dots a_m\}$ and $C_i=S_i-M~~(1\leq i\leq ks+t)$. Let $\mathcal{F}_0=\{F\in\mathcal{F}~\big|~F\cap M=\emptyset\}$, and $\mathcal{F}_i=\{F\in\mathcal{F}~\big|~a_i\in F\}$ for $1\leq i\leq m$.

Assume that $|\mathcal{F}_0|\geq s$. Let $B_1, B_2, \dots B_s\in\mathcal{F}_0$ be different sets. Since $\left|\displaystyle\bigcup_{i=1}^s B_i\right|\leq ks$, and the sets $\{C_1, C_2, \dots C_{ks+t}\}$ are pairwise disjoint, there some indices $i_1, i_2, \dots i_t$ such that
\begin{equation}
\emptyset=\left(\bigcup_{i=1}^s B_i\right) \cap \left(\bigcup_{j=1}^t C_{i_j}\right) = \left(\bigcup_{i=1}^s B_i\right) \cap \left(\bigcup_{j=1}^t S_{i_j}\right).
\end{equation}
It contradicts our assumption that $\mathcal{F}$ is $(s,t)$-union-intersecting, so $|\mathcal{F}_0|\leq s-1$.

Note that the obvious inequality $|\mathcal{F}_i|\leq {n-1\choose k-1}$ holds for all $1\leq i\leq m$, so the statement of the theorem is true if $|\mathcal{F}-\mathcal{F}_i|\leq s-1$ holds for any $i$.

Finally, assume that $|\mathcal{F}-\mathcal{F}_i|\geq s$ holds for all $1\leq i\leq m$. Using Lemma \ref{uniformlb} with $\mathcal{A}=\mathcal{F}_i$ and $\mathcal{B}=\mathcal{F}-\mathcal{F}_i$, we get that $|\mathcal{F}_i|=O(n^{k-2})$. Then
\begin{equation}
|\mathcal{F}|\leq \sum_{i=0}^m |\mathcal{F}_i|\leq (s-1)+m\cdot O(n^{k-2})= O(n^{k-2}).
\end{equation}
Since ${n-1 \choose k-1}+s-1$ is a polynomial of $n$ with degree $k-1$, $|\mathcal{F}|\leq {n-1 \choose k-1}+s-1$ holds for large enough $n$.
\end{proof}

\begin{remark}
Note that Theorem \ref{uniformthm} generalizes the Erd\H{o}s-Ko-Rado theorem \cite{ekr} for large enough $n$, since letting $s=1$ and $t\geq 2$, we get the same upper bound for $|\mathcal{F}|$ while having weaker conditions on $\mathcal{F}$.
\end{remark}

\begin{question}
Let $\mathcal{F}$ be a set system satisfying the conditions of Theorem \ref{uniformthm}. What is the best upper bound for $|\mathcal{F}|$, when $n$ is small?
\end{question}

\end{document}